\newcommand{\bC}{{\mathbb{C}}}
\newcommand{\bR}{{\mathbb{R}}}
  \newcommand{\M}{{\mathcal{M}}}
  \newcommand{\R}{{\mathcal{R}}}
\newcommand{\ep}{\varepsilon}
\renewcommand{\phi}{\varphi}
\newcommand{\upchi}{{\raise.35ex\hbox{\ensuremath{\chi}}}}
\newcommand{\eps}{\varepsilon}
\renewcommand{\leq}{\leqslant}
\renewcommand{\geq}{\geqslant}
\newtheorem{thm}{Theorem}[section]
\newtheorem{cor}[thm]{Corollary}
\newtheorem{lemma}[thm]{Lemma}
\newtheorem{rk}[thm]{Remark}
\begin{document}

\title{An $L_p$-inequality for anticommutators}
\date{}
\author[\'E. Ricard]{\'Eric Ricard}
\address{Normandie Univ, UNICAEN, CNRS, Laboratoire de Math{\'e}matiques Nicolas Oresme, 14000 Caen, France}
\email{eric.ricard@unicaen.fr}

\thanks{{\it 2010 Mathematics Subject Classification:} 46L51; 47B10.} 
\thanks{{\it Key words:} Noncommutative $L_p$-spaces, functional calculus}

\begin{abstract}
We prove a basic inequality involving anticommutators in noncommutative
$L_p$-spaces. We use it to complete our study of the noncommutative Mazur
maps from $L_p$ to $L_q$ showing that they are Lipschitz on balls when $0<q<p<\infty$. 
\end{abstract}
\maketitle
\section{Introduction}

This short note deals with noncommutative integration in von Neumann algebras.
We refer to \cite{PX} for basic definitions and notation  in the semifinite or type III setting. We will simply denote by $L_p$ the noncommutative  $L_p$-space associated
to a semifinite von Neumann algebra $\M$ with a trace
$\tau$ or a general von Neumann algebra $\M$ with a weight $\phi$.

The Mazur maps $M_{p,q}$ are bijections between $L_p$ and $L_q$
defined by $M_{p,q}(f)=f|f|^{p/q-1}$ for $0<p,q<\infty$. They are
known to be uniformly continuous on balls by \cite{Ray}.  Very few
precise estimates were known. Kosaki gave some of them \cite{Kos,Kos2}
and the work of Alexandrov and Peller \cite{AP} on the functional
calculus can be used for the case $1<q<p$ on Schatten classes. In
several papers, we tried to quantify this continuity.  In \cite{R}, we
treated the Banach space case $1\leq p,q<\infty$. Just as in the
commutative case, when $p>q$, $M_{p,q}$ is globally $\frac
pq$-H\"older and simply Lipschitz on balls when $q<p$. The involved
tools make a crucial use of convexity that is not available below
exponent 1. For instance, unital completely positive Schur multipliers
are no longer bounded on the Schatten $p$-class $S_p$ when $p<1$. They
are used in lot of inequalities in noncommutative $L_p$ and are
closely related to functional calculus; this explains why the
quasi-Banach situation is much more difficult to handle and pretty
much unexplored.  The main inequality of \cite{PR} used
plurisubharmonicity of the $L_p$-norm (in the finite case) to overcome
this lack of convexity.  It leads to non optimal estimates for the
modulus of continuity of $M_{p,q}$ is the quasi-Banach case. To deal
with $p<q$ in \cite{R2}, we used precise estimates on the boundedness
of Schur multipliers on $S_p$ as well as a suitable decomposition of
operators to get the optimal rate. The main motivation for this
paper is to settle the remaining case $q<p$ when $q<1$.

Our main argument is a variation on \cite{PR} combined with general
techniques from \cite{R}. Our main inequality is written in terms of
an anticommutator estimate which barely says that a kind of weighted
triangular truncation is bounded from $L_p$ to $L_q$. Not so
surprisingly, it implies the $L_p$-$L_q$ boundedness of some Schur
type multipliers that turn out to be enough to deal with Mazur maps.
The proofs of the inequalities are made for semifinite von Neumann algebras
(finite is enough by \cite{R}). The general case can
be obtained using the Haagerup reduction principle as in \cite{R}.

We will focus only on the relevant inequalities and skip
technical approximation arguments that can be found in details in \cite{PR, R, R2}. As often, we write $C_{p,q}$ for constants that depends on the parameter $p$, $q$ and may differ from line to line.

\section{Inequalities}
\subsection{Main result}

Let $(\M,\tau)$ be a semifinite von Neumann algebra.
\begin{thm}\label{main}
  Fix $\alpha>0$, $0<s<\infty$ and $0<r\leq \infty$. Let $p$ be so
  that $\frac 1 p = \frac 1s+\frac 1r$ and $q$ so that $\frac 1 q = \frac {1+\alpha}s+\frac 1r$. Then there is a constant $C_{\alpha,q}$ so that for any $d\in L_s^+$  and $x\in L_r$:
$$\| xd^{1+\alpha}\|_q \leq C_{\alpha,q} \|d\|_s^\alpha .\| dx+xd\|_p.$$
\end{thm}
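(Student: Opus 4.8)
The plan is to transcribe the anticommutator estimate into a Schur-multiplier bound via an integral representation, and then to split the argument according to whether the target exponent lies above or below $1$, the genuinely new content being the quasi-Banach case $q<1$.

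First I would carry out the standard reductions: by the Haagerup reduction principle and \cite{R} it is enough to work in a finite von Neumann algebra, and since both sides are homogeneous of degree $1+\alpha$ in $d$ and of degree $1$ in $x$, I may normalize $\|d\|_s=1$. Restricting attention to the right support of $d$ (only the columns of $x$ over $\supp(d)$ enter $xd^{1+\alpha}$), I may assume $d$ has trivial kernel. Diagonalising $d$ with eigenvalues $\lambda_i>0$, the anticommutator acts as $y:=dx+xd=\big((\lambda_i+\lambda_j)x_{ij}\big)$, so the quantity to be estimated is the image of $y$ under a Schur multiplier,
\[
xd^{1+\alpha}=T_\phi(y),\qquad \phi_{ij}=\frac{\lambda_j^{1+\alpha}}{\lambda_i+\lambda_j},
\]
and the theorem is exactly the $L_p\to L_q$ boundedness of $T_\phi$ with norm $\le C_{\alpha,q}\|d\|_s^\alpha$. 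The resolvent identity $\tfrac1{\lambda_i+\lambda_j}=\int_0^\infty e^{-t\lambda_i}e^{-t\lambda_j}\,dt$ furnishes the integral representation
\[
xd^{1+\alpha}=\int_0^\infty e^{-td}\,y\,e^{-td}\,d^{1+\alpha}\,dt,
\]
which converges on every block meeting the support of $d$.

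When $q\ge 1$ one has $p>q\ge 1$ (because $\tfrac1p<\tfrac1q$), and the estimate is the known one from \cite{R}: factoring $xd^{1+\alpha}=(xd)\,d^\alpha$ and applying H\"older with $\tfrac1q=\tfrac1p+\tfrac\alpha s$ gives $\|xd^{1+\alpha}\|_q\le\|xd\|_p\,\|d\|_s^\alpha$, while the Schur multiplier $\lambda_j/(\lambda_i+\lambda_j)$ is completely bounded on $L_p$ for $1<p<\infty$, so that $\|xd\|_p\le C_p\|dx+xd\|_p$. The whole point of the paper is the range $q<1$, where this collapses: the triangle (Minkowski) inequality in the target $L_q$ is gone, and the multiplier $\lambda_j/(\lambda_i+\lambda_j)$ underlying the clean factorization is exactly a weighted triangular truncation, hence unbounded on $L_p$ for $p\le 1$. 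Note that H\"older itself remains valid below exponent $1$, so the obstruction is precisely the non-convexity of the target together with the non-holomorphic coupling of the two factors $e^{-td}$ in the representation above.

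For $q<1$ I would follow the variation on \cite{PR}, replacing the defective triangle inequality by plurisubharmonicity of the $L_q$-quasinorm. The idea is to build an $L_q$-valued function $F$, analytic on a disk (or strip), whose value at a distinguished interior point is a unitary rotation of $xd^{1+\alpha}$ and whose boundary values, obtained from the integral representation with the diagonal unitaries $d^{\,i\theta\alpha}$ inserted, factor as genuine operator products to which H\"older (valid for all exponents) applies, peeling off the factor $\|d^\alpha\|_{s/\alpha}=\|d\|_s^\alpha$. Since $z\mapsto\tau(|F(z)|^q)$ is subharmonic in the finite setting, the sub-mean-value inequality $\|F(0)\|_q^q\le\frac1{2\pi}\int_0^{2\pi}\|F(e^{i\theta})\|_q^q\,d\theta$ then transfers the boundary estimate to the interior and yields the claim.

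The main obstacle is this last construction: one must design an analytic family that decouples the two exponentials $e^{-td}$ (which obstruct a naive holomorphic extension of the representation), that places the target $xd^{1+\alpha}$ at an interior point, and that forces the boundary values into an honestly factored form amenable to H\"older, all while keeping the $t$-integral convergent uniformly across the family. The kernel/support behaviour of $d$ and the passage from the finite to the semifinite case are the remaining technical points, which I would dispatch by the approximation arguments of \cite{PR, R, R2} rather than in detail here.
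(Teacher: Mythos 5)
Your reductions and your treatment of $q\ge 1$ are fine: H\"older with $\frac 1q=\frac 1p+\frac\alpha s$ plus the $L_p$-boundedness of the multiplier $\lambda_j/(\lambda_i+\lambda_j)$ for $1<p<\infty$ is essentially how the paper itself disposes of $q>1$ (via \cite{JunPar}). The gap is in the only case that matters, $q<1$: you never construct the analytic family on which your whole argument rests, and you yourself flag it as ``the main obstacle.'' This is not a routine missing detail but the crux, and the mechanism you propose cannot supply it. For your sub-mean-value scheme to work, the anticommutator $y=dx+xd$ would have to appear as an explicit factor in the boundary values on \emph{both} boundary components (otherwise that component is only controlled by quantities like $\|x\|_r$ or $\|xd^{1+\alpha}\|_q$ itself, since complex powers $d^{it}$ are unitary on vertical lines); but then, by analytic continuation, $y$ factors out of $F$ at the interior point too, which amounts to realizing the Schur multiplier $\lambda_j^{1+\alpha}/(\lambda_i+\lambda_j)$ in a bounded way on $L_p$, $p\le 1$ --- exactly the weighted triangular truncation whose unboundedness is the source of the problem. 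Plurisubharmonicity alone can therefore only deliver geometric-mean bounds of the form $\|xd\|_p\le C_{\ep,p}(\|d\|_s\|x\|_r)^\ep\|dx+xd\|_p^{1-\ep}$; this is precisely the paper's Corollary, obtained from \cite[Lemma 2.5]{PR}, and the introduction states explicitly that this route from \cite{PR} leads to non-optimal estimates. It cannot yield the bound of Theorem \ref{main}, which is linear in $\|dx+xd\|_p$ with no factor of $\|x\|_r$.

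The paper's actual proof for $q\le 1$ has the opposite architecture and uses a strictly stronger tool. It takes $F(z)=d^{(1+\alpha)z}xd^{(1+\alpha)(1-z)}$ (after reducing to $x=x^*$ by a $2\times 2$ trick), so the target $\|xd^{1+\alpha}\|_q$ sits on the \emph{boundary} and the interior value is $F(\gamma)=d^\alpha xd$, $\gamma=\alpha/(1+\alpha)$. One then argues by dichotomy: if $\|dx+xd\|_p\ge\eta\|xd^{1+\alpha}\|_q$ there is nothing to prove; otherwise the $q$-triangle and H\"older inequalities force $\|d^\alpha xd\|_q\approx\|xd^{1+\alpha}\|_q$, i.e.\ the interior value nearly saturates the boundary norm. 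At that point the sub-mean-value inequality gives nothing, and the paper invokes the complex \emph{uniform} convexity of $L_q$ (Theorem \ref{conv}, from \cite{Xust}): near-equality forces $F$ to be close to the constant $d^\alpha xd$ on a set of large Poisson measure on the boundary. A doubling lemma for the Poisson measure of the strip (Lemma \ref{meseq}) then yields a point $1+2it$ such that both $1+it$ and $1+2it$ are good; playing these against each other with the unitaries $u_h=d^{(1+\alpha)ih}$ gives $\|d^{1+\alpha}x-d^\alpha xd\|_q\le 3^{1/q}\ep\|xd^{1+\alpha}\|_q$, and since $d^{1+\alpha}x+d^\alpha xd=d^\alpha(dx+xd)$ does factor for H\"older, this contradicts the smallness assumption once $\eta$ is chosen appropriately. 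None of these ingredients --- the quantitative convexity inequality, the measure-doubling lemma, the dichotomy structure --- appears in your proposal, and without them (or an honest construction of the family you postulate, which the obstruction above makes doubtful) the case $q<1$ remains unproved.
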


The proof will be a variation on the arguments of \cite{PR}. We will
need a bit of notation. Let $\Delta$ be the unit strip in $\bC$ with boundaries $\partial_k$ for $k=0,1$:
$$\Delta= \{ z\in \bC \;;\; 0<{\rm Re}\, z<1\}; \quad \partial_i= \{
z\in \bC \;;\; {\rm Re}\, z=k\}.$$ For $0<\gamma<1$, we will denote by
$\mathbb P^\gamma$ the Poisson measure on the boundary of $\Delta$
relative to the point $\gamma$. It is explicitly given by densities
on $\partial_k$ with respect to the Lebesgue measure, for $t\in \bR$:
$$ Q_\gamma(k+it)= \frac{\sin(\gamma\pi)}{2\big(\cosh(\pi t)- (-1)^k \cos(\gamma \pi)\big)}.$$

For a  $A\subset \partial_0\cup \partial_1$ , we write $2.A$ for its dilation by a factor 2 in the imaginary direction.

\begin{lemma}\label{meseq}
  There is a constant $C_\gamma>0$, such that for any borel set
  $A\subset \partial_0\cup \partial_1$, one has $\mathbb
  P^\gamma(2.A)\leq C_\gamma \mathbb P^\gamma(A)$.
\end{lemma}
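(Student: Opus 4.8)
The plan is to reduce this measure-theoretic doubling inequality to a pointwise comparison of the Poisson densities. First I would split $A = A_0 \cup A_1$ with $A_k = A \cap \partial_k$, and identify each $A_k$ with a Borel subset of $\bR$ via $k + it \leftrightarrow t$. Under this identification the dilation $2.A_k$ corresponds to $2A_k = \{2t \;;\; t \in A_k\}$, and the restriction of $\mathbb P^\gamma$ to $\partial_k$ has density $Q_\gamma(k+it)$ against $dt$. Performing the change of variables $s = 2t$ in the integral defining $\mathbb P^\gamma(2.A_k)$ gives
$$\mathbb P^\gamma(2.A_k) = \int_{2A_k} Q_\gamma(k+is)\,ds = 2\int_{A_k} Q_\gamma(k+2it)\,dt,$$
so it suffices to dominate $Q_\gamma(k+2it)$ by $Q_\gamma(k+it)$ uniformly in $t$.

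The key step is then the pointwise inequality $Q_\gamma(k+2it) \leq Q_\gamma(k+it)$ for every $t \in \bR$ and $k \in \{0,1\}$. Writing $c_k = (-1)^k \cos(\gamma\pi)$, the hypothesis $0<\gamma<1$ forces $|\cos(\gamma\pi)| < 1$, so $c_k \in (-1,1)$ and both denominators $\cosh(\pi t) - c_k$ and $\cosh(2\pi t) - c_k$ are bounded below by $1 - c_k > 0$, hence strictly positive. Since $\cosh$ is even and increasing on $[0,\infty)$ we have $\cosh(2\pi t) \geq \cosh(\pi t)$, so the denominator at $2t$ dominates that at $t$; as the common numerator $\sin(\gamma\pi)/2$ is positive, the claimed pointwise domination of the densities follows at once.

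Combining the two steps yields $\mathbb P^\gamma(2.A_k) \leq 2\int_{A_k} Q_\gamma(k+it)\,dt = 2\,\mathbb P^\gamma(A_k)$, and summing over $k = 0,1$ gives $\mathbb P^\gamma(2.A) \leq 2\,\mathbb P^\gamma(A)$. Thus the lemma holds with $C_\gamma = 2$, a constant which in fact may be taken independent of $\gamma$. I do not expect a genuine obstacle here: the only point requiring care is checking that the denominators stay positive, which is exactly where the constraint $0<\gamma<1$ enters, after which the entire statement collapses to the elementary monotonicity of $\cosh$ on the half-line.
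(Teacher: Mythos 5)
Your proof is correct, and it is a streamlined variant of the paper's argument: both rest on the same two facts, namely that the dilation $t \mapsto 2t$ contributes a Jacobian factor of $2$ and that $\cosh(2\pi t) \geq \cosh(\pi t)$. The difference is organizational but real. The paper first replaces $\mathbb P^\gamma$ by an auxiliary measure $\mu$ with the $\gamma$-free density $Q(k+it) = 1/\cosh(\pi t)$, proves the doubling inequality $\mu(2.A) \leq 2\mu(A)$, and then transfers it back; the two-sided comparison between $Q_\gamma$ and $Q$ costs a factor depending on $\gamma$ (roughly $4/(1-|\cos(\gamma\pi)|)$, which blows up as $\gamma \to 0$ or $\gamma \to 1$). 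You instead compare the densities of $\mathbb P^\gamma$ at $t$ and $2t$ directly, observing that subtracting the same constant $c_k = (-1)^k\cos(\gamma\pi) \in (-1,1)$ from $\cosh(\pi t)$ and from $\cosh(2\pi t)$ preserves both positivity and the monotone comparison of the denominators. This buys you the uniform constant $C_\gamma = 2$, independent of $\gamma$, whereas the paper's route gives a $\gamma$-dependent one. For the application in Theorem \ref{main} this makes no difference, since the constant there is allowed to depend on $\alpha$ and hence on $\gamma = \alpha/(1+\alpha)$, but your version is sharper and arguably simpler, as it avoids introducing the intermediate measure altogether.
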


\begin{proof}
We need the obvious fact that $\mathbb P^\gamma$ is equivalent to $\mu$
the measure on $\partial_0\cup \partial_1$ with density $Q(k+it)=
\frac{1}{\cosh(\pi t)}$ with a constant that depends only on $\gamma$:
$$ k=0,1,\quad \forall t\in \bR,\qquad Q_\gamma(k+it)\leq \frac {\sin(\gamma \pi)}{2\big(1-|\cos(\gamma\pi)|\big)}Q(k+it),
\quad  Q(k+it)\leq \frac 4{\sin(\gamma \pi)} Q_\gamma(k+it).$$
Thus it suffices to prove the lemma for $\mu$. But for any borel set $B$ in $\mathbb R$ 
$$ \int_{2B} \frac {dt}{\cosh(\pi t)} = 2 \int_B \frac {du}{\cosh(2\pi u )}
\leq 2 \int_B \frac {dt}{\cosh(\pi t )}.$$
It follows that $\mu(2.A)\leq 2\mu(A)$ for any borel set of $\partial_0\cup \partial_1$.

  \end{proof}
Let us recall the complex uniform convexity of $L_q$ from \cite{Xust},
see also \cite[Remark 2.8]{PR}.  To avoid technical discussions, we
will consider only the set $AF(\Delta)$ of holomorphic functions
$F:\Delta\to \M$ that are bounded and extend continuously on
$\partial_k$, $k=0,1$ with values in a given finite von Neumann
subalgebra of $\M$ (depending on $F$). For such an $F$ and
$0<\gamma<1$, we denote by $\|F\|_{L_q^\gamma}$ its $L_q$ norm in with
respect to the measure $\mathbb P^\gamma$ considering it with values in $L_q(\M,\tau)$.

\begin{thm}\label{conv}
Let $0<q\leq 2$, there is a constant $\delta_q>0$ such that for any
$F\in AF(\M)$ and $0<\gamma<1$:
$$\|F(\gamma)\|_q^2 + \delta_q \|F-F(\gamma)\|_{L_q^\gamma}^2 \leq \|F\|_{L_q^\gamma}^2.$$
  \end{thm}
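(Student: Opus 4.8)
The statement is exactly the complex uniform convexity --- $2$-uniform PL-convexity --- of the noncommutative space $L_q(\M)$ for $0<q\le 2$, written in harmonic form on the strip, and I would deduce it from the corresponding result of \cite{Xust}. First note the geometric meaning of the ingredients. Since $\mathbb{P}^\gamma$ is the harmonic measure of $\Delta$ at the interior point $\gamma$, the Poisson representation gives the mean value property $F(\gamma)=\int_{\partial_0\cup\partial_1}F\,d\mathbb{P}^\gamma$; and $\|\cdot\|_{L_q^\gamma}$ is nothing but the norm of the \emph{single} noncommutative $L_q$-space $L_q(\N)$ attached to $\N=L^\infty(\partial_0\cup\partial_1,\mathbb{P}^\gamma)\,\ol{\tens}\,\M$. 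Writing $a=F(\gamma)$ and $b=F-F(\gamma)$, so that $b$ is the boundary field of a mean-zero analytic function, the inequality to prove reads $\|a\|_q^2+\delta_q\|b\|_q^2\le\|a+b\|_q^2$ in $L_q(\N)$. The plan is to isolate the two-point (affine) case and then to globalise it along the analytic structure.

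The engine is the pointwise inequality of \cite{Xust}: for $0<q\le2$ there is $\delta_q>0$ with
$$\frac1{2\pi}\int_0^{2\pi}\|a+e^{i\theta}b\|_q^2\,d\theta\ \ge\ \|a\|_q^2+\delta_q\,\|b\|_q^2\qquad(a,b\in L_q).$$
Applied to the degree-one function $F(z)=a+bz$ this is precisely the theorem for affine $F$, so the whole task is to upgrade it to arbitrary $F\in AF(\Delta)$. I would recall its proof only in outline: by the usual spectral/tensorisation reductions one is brought to a scalar complex-convexity estimate for $\frac1{2\pi}\int_0^{2\pi}|\alpha+e^{i\theta}\beta|^q\,d\theta$, valid for every $q>0$, the delicate point being to carry it to the noncommutative $\|\cdot\|_q$ without any use of convexity.

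To globalise, I would not integrate the subharmonicity of $z\mapsto\|F(z)\|_q^2$: that only controls $\int\|F\|_q^2\,d\mathbb{P}^\gamma$, the $L_2(\mathbb{P}^\gamma)$-average, whereas the theorem involves the strictly smaller $L_q(\mathbb{P}^\gamma)$-average, and in any case the boundary term comes out with the wrong sign unless $L_q$ is $2$-uniformly smooth, which fails for $q<2$. Instead I would apply the two-point inequality mode by mode. Passing to the unit disc, to which $\Delta$ is conformally equivalent and on which $\mathbb{P}^\gamma$ becomes normalised Lebesgue measure, one rotates the $k$-th Fourier coefficient of $b$ by an independent phase $\zeta_1\cdots\zeta_k$ and averages over the polytorus, in the manner of Davis--Garling--Tomczak-Jaegermann. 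This analytic-martingale averaging stays inside the single space $L_q(\N)$ and reproduces the deviation term $\delta_q\|b\|_q^2$ with a uniform constant. For $1\le q\le2$ all of this is automatic, since $L_q$ is then genuinely $2$-uniformly convex and of martingale cotype~$2$; the real obstacle --- and the reason the argument must be phrased through plurisubharmonicity --- is the range $q<1$, where $L_q$ is not locally convex and the averaging operators are unbounded, so that no soft convexity argument survives and one is forced onto the complex estimate of \cite{Xust}.
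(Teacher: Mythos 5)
The first thing to note is that the paper does not prove this statement at all: Theorem \ref{conv} is recalled from \cite{Xust} (see also \cite[Remark 2.8]{PR}), where the inequality is established on the disc; the strip version follows by conformal invariance of harmonic measure, as you correctly observe. So the only question is whether your derivation from the two-point inequality is a valid proof, and it is not: the globalization step is broken. Replacing the phase $e^{ik\theta}$ in the $k$-th Fourier coefficient of $b=F-F(\gamma)$ by $\zeta_1\cdots\zeta_k$ and averaging over the polytorus does not reproduce anything attached to $F$: since $(\zeta_1,\dots,\zeta_n)\mapsto(\zeta_1,\zeta_1\zeta_2,\dots,\zeta_1\cdots\zeta_n)$ is a group automorphism of $\mathbb{T}^n$, hence Haar measure preserving, the phases $w_k=\zeta_1\cdots\zeta_k$ are \emph{independent} uniform variables, and your polytorus average is simply the Steinhaus average of $a_0+\sum_{k\geq1}a_k\eta_k$ with $(\eta_k)$ i.i.d. The analytic boundary average need not dominate this quantity, so the first inequality your scheme requires, namely $\|F\|_{L_q^\gamma}^q\geq\int_{\mathbb{T}^n}\|a_0+\sum_{k\geq 1}a_kw_k\|_q^q$, is false. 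Already for scalars: $F(z)=1+z+z^2$ has both zeros on the unit circle, so Jensen's formula gives $\int_0^{2\pi}\log|F(e^{i\theta})|\,\frac{d\theta}{2\pi}=\log|F(0)|=0$, whereas $\int_{\mathbb{T}^2}\log|1+\eta_1+\eta_2|\,d\eta_1\,d\eta_2=\int_0^{2\pi}\log^+|1+e^{i\theta}|\,\frac{d\theta}{2\pi}>0$; expanding $t^q=1+q\log t+O(q^2\log^2t)$ then shows that for small $q$ the Steinhaus $q$-average strictly exceeds the analytic one. The structural reason is that the two-point inequality iterates only along martingales whose increments are \emph{linear} in a fresh independent phase; boundary values of analytic functions are not of that type, and decoupling their coefficients changes the law (a Hardy-martingale embedding would instead require the two-point inequality with an $H^q_0$ increment, i.e.\ the full theorem --- circular). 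Bridging exactly this gap is the hard content of Xu's theorem: his proof runs through Pisier's factorization theorem for operator-valued $H^q$-functions (whence the title of \cite{Xust}), not through phase averaging of Davis--Garling--Tomczak-Jaegermann type, and no soft argument of the kind you sketch is known, in particular for $q\leq1$.

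Two smaller points. Your ``engine'' is stated with the $L_2(d\theta)$-average $\frac1{2\pi}\int_0^{2\pi}\|a+e^{i\theta}b\|_q^2\,d\theta$ on the large side; the affine case of the theorem needs the smaller $L_q(d\theta)$-average $\bigl(\frac1{2\pi}\int_0^{2\pi}\|a+e^{i\theta}b\|_q^q\,d\theta\bigr)^{2/q}$ in that place, and it is this stronger form that Haagerup (for $S_1$) and Xu (for $0<q\leq2$) actually prove. So, as written, your engine does not even yield the theorem for $F$ of degree one --- by exactly the $L_2$-versus-$L_q$ distinction you rightly raise against the subharmonicity route. Finally, the range $1\leq q\leq 2$ is not ``automatic'': $L_1$ is not uniformly convex in the real sense and has no martingale cotype $2$; $q=1$ is precisely Haagerup's genuinely hard case.
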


\begin{proof}[Proof of Theorem \ref{main}]
  By standard approximation arguments (see \cite[Proof of 2.1]{PR}), we
  can assume that $d$ and $x$ sit in a finite von Neumann algebra and
  that $d$ is invertible with $\|d\|_s=1$ and has a finite discrete spectrum. Moreover using a $2\times2$ matrix trick, we can assume
  $x=x^*$ loosing on the constant $C_{\alpha,q}$. This is to ensure that
  $\|xd^{1+\alpha}\|_q=\|d^{1+\alpha}x\|_q$.

  We start by the most difficult case $q\leq 1$.

  Let $1>\eta>0$ to be fixed latter and set $\gamma=\frac {\alpha}{1+\alpha}\in (0,1)$. We first assume that 
  $\| xd+dx\|_p\leq \eta \|x d^{1+\alpha}\|_q$.

  By the $q$-triangle and the H\"older inequalities, we have
  $$\| x d^{1+\alpha} \|_q^q=\|  d^{1+\alpha}x \|_q^q \leq \|d^\alpha x d\|_q^q + \|d^\alpha(xd+dx)\|_q^q\leq
  \|d^\alpha x d\|_q^q+ \eta^q \| x d^{1+\alpha} \|_q^q.$$
  Thus $(1-\eta^q) \| x d^{1+\alpha} \|_q^q \leq \|d^\alpha x d\|_q^q$.

  Because of all the assumptions we made on $d$, the function given by
  $F(z)= d^{(1+\alpha)z} xd^{(1+\alpha)(1-z)}$ is in
  $AF(\M)$. We have $F(\gamma)=d^\alpha xd$ and moreover for all $t\in\bR$ as $x=x^*$
  $$\|F(it)\|_q=\|xd^{1+\alpha}\|_q=\|d^{1+\alpha}x\|_q=\|F(1+it)\|_q,$$
  thus $\|F\|_{L_q^\gamma}=  \|xd^{1+\alpha}\|_q$. By Theorem \ref{conv},
  $$\delta_q \| F- d^\alpha xd\|_{L_q^\gamma}^2\leq \|xd^{1+\alpha}\|_q^2-
  \|d^\alpha xd\|_q^2\leq \big(1 - (1-\eta^q)^{2/q}\big)\|xd^{1+\alpha}\|_q^2 .$$

  Let  $\ep= 2.6^{-1/q}$ and $A=\{ z\in \partial_0\cup \partial_1\; |\;
  \| F(z)-d^\alpha xd\|_q \geq \ep \|xd^{1+\alpha}\|_q\}.$

  By the Markov inequality,
  $$\mathbb P^\gamma (A)\leq \frac{ \big(1 -
    (1-\eta^q)^{2/q}\big)^{q/2}}{\delta_q^{q/2}\ep^q}.$$ By Lemma
  \ref{meseq}, if $\eta$ is chosen small enough, we have that $\mathbb
  P^\gamma(A\cup 2.A)\leq \gamma/2$. Since $\mathbb
  P^\gamma(\delta_1)=\gamma$, we can find $z\in \delta_1\setminus
  (A\cup 2 A).$ Let $z=1+2it$ with $t\in\bR$ and put
  $u_h=d^{(1+\alpha)ih}$ which is obviously an unitary commuting with $d$
  for all $h\in\bR$. By the definition of $2.A$ and $A$ we have
$$ \| d^{1+\alpha}u_t xu_{-t}-d^\alpha xd\|_q, \| d^{1+\alpha}u_{2t} xu_{-2t}-d^\alpha xd\|_q \leq \ep\|xd^{1+\alpha}\|_q.$$ 
  By unitary invariance, the second inequality is $\| d^{1+\alpha}u_{t} xu_{-t}-u_{-t}d^\alpha xdu_t\|_q \leq \ep\|xd^{1+\alpha}\|_q$, thus
  $$\| d^\alpha xd- u_{-t}d^\alpha xdu_t\|_q=\| d^\alpha xd- u_{t}d^\alpha xdu_{-t}\|_q
  \leq 2^{1/q}\ep   \|xd^{1+\alpha}\|_q.$$
  Using the first inequality and unitary invariance again
  $$\|d^{1+\alpha} x  -d^\alpha x d\|_q \leq 3^{1/q} \ep \|xd^{1+\alpha}\|_q.$$
  Then by the $q$-triangle and the H\"older inequalities
  $$\| 2 d^{1+\alpha} x\|_q^q \leq \|   d^{1+\alpha} x + d^\alpha xd\|_q^q +
  3\ep^q \|xd^{1+\alpha}\|_q^q\leq \|   d x + xd\|_p^q +
  3\ep^q \|xd^{1+\alpha}\|_q^q .$$
  We finally get $\|   d x + xd\|_p\geq (2^q-3\ep^q)^{1/q} \|xd^{1+\alpha}\|_q= 2^{1-1/q}\|xd^{1+\alpha}\|_q$ assuming $\| xd+dx\|_p\leq \eta \|x d^{1+\alpha}\|_q$.

  Thus for $\eta$ chosen as above, we must have for all $x\in L_r$ and $d\in L_s^+$ with $\|d\|_s=1$:
  $$\|   d x + xd\|_p\geq \min\{\eta,2^{1-1/q}\}.  \|xd^{1+\alpha}\|_q.$$

  The case $1\leq q\leq 2$ can be done in the exact same way replacing the $q$-triangle inequality by the usual triangle inequality.

  Actually when $q>1$, then $p>1$ and we have a stronger inequality
  $$ \|xd\|_p\leq C_p \| d x + xd\|_p.$$ This follows from the
  complete boundedness of triangular projections on $L_p$, see \cite{JunPar}. Thus we get the inequality with constant $C_p\leq
  C_{(1+\alpha)q}$. The constant can even be made independent of
  $\alpha$, we sketch an argument.  As in Corollary 2.3 of \cite{RX}, for any $n$ and any sequence $d_i>0$, the matrix $(\frac
  {d_i^{1+\alpha}+d_j^{1+\alpha}}{(d_i+d_j)(d_i^\alpha+d_j^\alpha)})$
  defines a Schur multiplier on $M_n$  with
  norm less than $\frac 52$. It follows
  $$ \|xd^{1+\alpha}\|_q\leq C_q \| d^{1+\alpha} x + xd^{1+\alpha}\|_q\leq
  \frac 5 2 C_q  \| d^{1+\alpha} x + d^\alpha xd+ d x d^\alpha + xd^{1+\alpha}\|_q.$$
  Thus using
  the H\"older and the triangle inequality, we end up with 
    $$ \|xd^{1+\alpha}\|_q \leq 5C_q \|d\|_s^\alpha \| dx+xd\|_p.$$
\end{proof}

\begin{rk}{\rm
For  integer values of $\alpha$, it is possible to give a simple
algebraic proof using the same kind of ideas but without using the complex uniform convexity.}
\end{rk}

Let us give an easy corollary that improves an estimate from \cite{PR} which is relevant only for $p\leq 1$:
\begin{cor}
  Let $0<\ep<1$, $0<s<\infty$ and $0<r\leq \infty$ let 
  $\frac 1 p = \frac 1s+\frac 1r$. Then there is a constant $C_{\ep,p}$ so that for any $d\in L_s^+$  and $x\in L_r$:
$$\| xd\|_p \leq C_{\ep,p} \big(\|d\|_s \|x\|_r)^\ep .\| dx+xd\|_p^{1-\ep}.$$
\end{cor}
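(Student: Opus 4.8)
The plan is to derive the corollary directly from Theorem \ref{main} by interpolating between the two ``endpoints'' $xd^{1+\alpha}$ and $x$, for a single well-chosen value of $\alpha$. Since both sides of the asserted inequality are homogeneous of degree $1$ separately in $d$ and in $x$, I would first replace $d$ by $d/\|d\|_s$ and $x$ by $x/\|x\|_r$, so that it suffices to treat the normalized case $\|d\|_s=\|x\|_r=1$ and prove $\|xd\|_p\le C_{\ep,p}\,\|dx+xd\|_p^{1-\ep}$; the factor $(\|d\|_s\|x\|_r)^\ep$ is then restored automatically at the end by homogeneity.

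Next I would set $\alpha=\frac{\ep}{1-\ep}>0$ and $\lambda=\frac1{1+\alpha}=1-\ep\in(0,1)$, and let $q$ be defined by $\frac1q=\frac{1+\alpha}s+\frac1r$, exactly as in Theorem \ref{main}. A one-line computation using $\lambda(1+\alpha)=1$ shows that $\frac1p=\frac{1-\lambda}r+\frac\lambda q$, i.e.\ $L_p$ is the complex interpolation space of the couple $(L_r,L_q)$ at parameter $\lambda$ (see \cite{PX}). The heart of the argument is then the interpolation inequality
$$\|xd\|_p\le \|x\|_r^{\,1-\lambda}\,\|xd^{1+\alpha}\|_q^{\,\lambda}.$$
I would prove this by running the three-line (Stein) argument on the analytic family $F(z)=xd^{(1+\alpha)z}$ on the strip $0\le \re z\le 1$. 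After the same reductions used in the proof of Theorem \ref{main} (take $d$ bounded and invertible with finite discrete spectrum inside a finite von Neumann subalgebra), $F$ is bounded and analytic; since $d^{(1+\alpha)it}$ is a unitary commuting with $d$, the boundary norms are $\|F(it)\|_r=\|x\|_r$ and $\|F(1+it)\|_q=\|xd^{1+\alpha}\|_q$ for all $t\in\bR$, while $F(\lambda)=xd^{(1+\alpha)\lambda}=xd$. With $\frac1p=\frac{1-\lambda}r+\frac\lambda q$, the complex interpolation bound for the noncommutative $L_p$-scale gives precisely the displayed estimate.

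Finally I would combine the two ingredients: in the normalized case, Theorem \ref{main} gives $\|xd^{1+\alpha}\|_q\le C_{\alpha,q}\|dx+xd\|_p$, and feeding this into the interpolation inequality (with $\|x\|_r=1$ and $\lambda=1-\ep$) yields
$$\|xd\|_p\le \|xd^{1+\alpha}\|_q^{\,1-\ep}\le \big(C_{\alpha,q}\|dx+xd\|_p\big)^{1-\ep}=C_{\alpha,q}^{\,1-\ep}\,\|dx+xd\|_p^{1-\ep}.$$
Undoing the normalization then produces the stated inequality with $C_{\ep,p}=C_{\alpha,q}^{1-\ep}$ (where $\alpha=\ep/(1-\ep)$ and $q=q(\ep,s,r)$). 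I expect the only genuinely delicate point to be the interpolation inequality, since one must be sure that the simultaneous variation of the $L_p$-index and of the power of $d$ along the family $d^{(1+\alpha)z}$ is covered by the available noncommutative interpolation machinery; this is exactly what the reduction to a finite algebra with $d$ of finite discrete spectrum secures, after which the remaining work is the bookkeeping choice $\alpha=\ep/(1-\ep)$ and the homogeneity rescaling.
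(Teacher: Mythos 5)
Your proposal is correct and follows essentially the same route as the paper: the paper likewise applies the complex-interpolation (three-line) inequality of \cite[Lemma 2.5]{PR} to the analytic family $F(z)=xd^{z/(1-\ep)}$ (after the same reduction to a finite algebra with $d$ invertible of finite spectrum) to obtain $\|xd\|_p\leq \|xd^{1/(1-\ep)}\|_q^{1-\ep}\,\|x\|_r^{\ep}$, and then concludes by Theorem \ref{main} with $\alpha=\ep/(1-\ep)$. The one point to make precise is the delicate step you flagged: for $q<1$ the three-line bound cannot come from classical interpolation as in \cite{PX}, but from the plurisubharmonicity-based lemma of \cite{PR}, which is exactly the reference the paper uses.
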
  
\begin{proof}
  This is a consequence of  the complex interpolation   \cite[Lemma 2.5]{PR}  for the function $F(z)=xd^{z/(1-\ep)}$ (with an approximation argument as above to make it in $AF(\M)$). Set $q$ so that $\frac 1 q= \frac {1-\ep} p +\frac \ep r$, 
  we  have that $\|xd\|_p\leq  \|xd^{1/(1-\ep)}\|_q^{1-\ep}  \|x\|_r^\eps$. We can conclude applying Theorem \ref{main} with $\alpha =\frac \ep{1-\ep}$.

\end{proof}
  
\subsection{Applications to the Mazur maps}

On a semifinite von Neumann algebra $(\M,\tau)$, the left Mazur map
$M_{p,q} :L_p\to L_q$ is given by $M_{p,q}(f)=f|f|^{p/q-1}$. It is
a uniformly continuous bijection on balls by \cite{Ray}. We determined its
modulus of continuity in our previous papers \cite{R,R2} except in the
case $p>q$ when $q<1$. It still coincides with its commutative analogue:
\begin{thm}\label{maz}
Let $(\M,\tau)$ be a semifinite von Neumann algebra, then for
$q<p<\infty$, the Mazur map $M_{p,q}$ is Lipschitz on balls:
$$\forall x,y\in L_p,\qquad \|M_{p,q}(x)-M_{p,q}(y)\|_q\leq C_{p,q} \max\{\|x\|_p,\|y\|_p\}^{p/q-1} . \| x-y\|_p.$$
  \end{thm}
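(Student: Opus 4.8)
Fix $\alpha=\frac pq-1>0$. As in the proof of Theorem \ref{main} I would first run the standard reductions: by homogeneity normalize $R:=\max\{\|x\|_p,\|y\|_p\}=1$; by approximation put everything in a finite $\M$ with $x,y$ of finite discrete spectrum; and by the anti-diagonal trick $f\mapsto\begin{pmatrix}0&f\\ f^*&0\end{pmatrix}$ in $M_2(\M)$, under which $M_{p,q}$ acts entrywise, reduce to $x=x^*$, $y=y^*$. Writing $g(t)=t|t|^\alpha$, so that $M_{p,q}(f)=g(f)$ on self-adjoints, and letting $p_i,q_j$ be the spectral projections of $x,y$ with eigenvalues $\lambda_i,\mu_j$, the difference becomes the finite sum
$$M_{p,q}(x)-M_{p,q}(y)=\sum_{i,j}\phi(\lambda_i,\mu_j)\,p_i(x-y)q_j,\qquad \phi(\lambda,\mu)=\frac{g(\lambda)-g(\mu)}{\lambda-\mu}.$$
Everything then reduces to bounding this divided-difference Schur multiplier $M_\phi$ from $L_p$ to $L_q$ with norm $\le C_{p,q}$ (the normalized form of $C_{p,q}R^\alpha$), applied to $h=x-y$.

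Next I would split by sign. If $P^\pm$ are the spectral projections of $x$ (resp.\ $y$) onto $\pm(0,\infty)$, cutting $h$ into the four pieces $P^\sigma_x\,h\,P^\tau_y$ costs nothing, since two-sided multiplication by projections is contractive on every $L_q$, $0<q\le\infty$. On the two mixed sectors one has exactly $\phi(\lambda,\mu)=\frac{|\lambda|^{1+\alpha}+|\mu|^{1+\alpha}}{|\lambda|+|\mu|}$, while on the two like-sign sectors $\phi$ is the genuine divided difference $\frac{a^{1+\alpha}-b^{1+\alpha}}{a-b}$ of the positive numbers $a=|\lambda|$, $b=|\mu|$; in all cases $\phi\asymp \max(|\lambda|,|\mu|)^\alpha$ with constants depending only on $\alpha$. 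The key point is that Theorem \ref{main}, read for the combined diagonal $\diag(|x|,|y|)$ with an off-diagonal argument in $M_2(\M)$, says precisely that the two weighted truncations with symbols $\frac{|\lambda|^{1+\alpha}}{|\lambda|+|\mu|}$ and $\frac{|\mu|^{1+\alpha}}{|\lambda|+|\mu|}$ are bounded $L_p\to L_q$ with norm $\lesssim\|\diag(|x|,|y|)\|_p^\alpha\lesssim1$. Hence the mixed-sector symbol, being their sum, is already controlled by Theorem \ref{main}.

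The crux is the like-sign sectors, and it is here that the range $q<1$ bites: one cannot finish by composing with an arbitrary bounded, or even completely bounded, Schur multiplier, since these need not act boundedly on $L_q$ when $q<1$ (already plain triangular truncation fails). The remedy is to write, for positive $a,b$,
$$\frac{a^{1+\alpha}-b^{1+\alpha}}{a-b}=\frac{a^{1+\alpha}+b^{1+\alpha}}{a+b}\cdot\rho(b/a),\qquad \rho(u)=\frac{(1-u^{1+\alpha})(1+u)}{(1-u)(1+u^{1+\alpha})},$$
so that on this sector $M_\phi=M_\rho\circ M_{(a^{1+\alpha}+b^{1+\alpha})/(a+b)}$; the second factor is bounded $L_p\to L_q$ by Theorem \ref{main} as above, and $\rho$ is the same type of correcting symbol (up to the factor $a^\alpha+b^\alpha$) as the explicit Schur multiplier of norm $\le\frac52$ appearing at the end of the proof of Theorem \ref{main}. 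The gain is that $\rho$ is homogeneous of degree $0$: the function $v\mapsto\rho(e^{v})$ is smooth, even, tends to $1$ at $\pm\infty$ and, with its derivatives, decays, so it differs from a constant by a function with integrable Fourier transform, say $\rho(e^{v})=\int_{\bR}e^{iv\xi}\,d\nu(\xi)$ with $\|\nu\|<\infty$. Consequently
$$M_\rho(w)=\int_{\bR}a^{-i\xi}\,w\,b^{i\xi}\,d\nu(\xi)$$
is an average of conjugations by the unitaries $a^{i\xi},b^{i\xi}$, hence bounded on $L_q$ for every $0<q<\infty$ with norm $\le\|\nu\|$. This is exactly the type of multiplier that survives below exponent $1$, and composing gives $M_\phi$ bounded $L_p\to L_q$ with norm $\lesssim1$ on the like-sign sectors as well.

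Summing the four sectors with the $q$-triangle inequality and undoing the normalization yields the Lipschitz bound with a constant $C_{p,q}$, and the $2\times2$ reduction transfers it from self-adjoint $x,y$ to arbitrary elements of $L_p$. The case $q\ge1$ (hence $p>1$) is easier and is essentially the boundedness of triangular projections already used in the proof of Theorem \ref{main}. I expect the main obstacle to be precisely the like-sign, $q<1$ step: checking that the corrective symbol $\rho$ admits the integrable spectral representation making $M_\rho$ an average of unitary conjugations. This is what replaces the forbidden appeal to boundedness of Schur multipliers on $L_q$ for $q<1$, and it is the reason the argument genuinely needs the $L_p\to L_q$ smoothing of Theorem \ref{main} rather than any single-space multiplier bound.
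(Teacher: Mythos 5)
Your reductions (normalization, finite spectra, the anti\-/diagonal $2\times2$ trick to reach self-adjoint $x,y$, the four-fold sign-sector splitting, and the reading of Theorem \ref{main} as $L_p\to L_q$ boundedness of the weighted truncations $\frac{|\lambda|^{1+\alpha}}{|\lambda|+|\mu|}$, $\frac{|\mu|^{1+\alpha}}{|\lambda|+|\mu|}$, hence of the mixed-sector symbol) are all sound. But the crux of your argument --- the like-sign sectors for $q<1$ --- contains a genuine gap, and it is exactly the one you flagged as the ``main obstacle''. You factor the divided difference as $\frac{a^{1+\alpha}+b^{1+\alpha}}{a+b}\cdot\rho(b/a)$ with $\rho(e^v)=\tanh\bigl((1+\alpha)v/2\bigr)/\tanh(v/2)$, write $\rho(e^v)=\int_{\bR}e^{iv\xi}\,d\nu(\xi)$ with $\|\nu\|<\infty$ (this part is fine: $\rho(e^\cdot)-1$ is smooth with exponential decay, so $\nu=\delta_0+$ an integrable density), and then claim that $M_\rho(w)=\int_{\bR}a^{-i\xi}wb^{i\xi}\,d\nu(\xi)$ is bounded on $L_q$ with norm $\le\|\nu\|$ for \emph{every} $0<q<\infty$. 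This last claim is false for $q<1$: in a quasi-normed $L_q$ there is no triangle inequality for integrals, so an average of isometric images against a non-atomic measure is not controlled by the total variation of the measure (the $q$-triangle inequality for a discretization into $n$ pieces of mass $1/n$ produces a factor $n^{1/q-1}\to\infty$). Worse, such averages of unitary conjugations $w\mapsto\int d^{-i\xi}wd^{i\xi}\,d\nu(\xi)$ with $\nu$ a probability measure are precisely the unital completely positive Schur multipliers that the paper's introduction singles out as the standard example of maps that are \emph{unbounded} on $S_p$ for $p<1$; and by the known characterization of Toeplitz--Schur multipliers on $S_p$, $0<p<1$ (Aleksandrov--Peller), a symbol whose spectral measure has a nontrivial absolutely continuous part --- as your $\nu$ does --- gives an unbounded multiplier on $S_q$. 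So the step is not merely unjustified; the asserted bound on $M_\rho$ fails, and with it the composition $M_\phi=M_\rho\circ M_{(a^{1+\alpha}+b^{1+\alpha})/(a+b)}$.

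The paper circumvents exactly this obstruction by a different treatment of the like-sign divided difference. First it restricts to $p<3q/2$, i.e. $p/q=1+t$ with $0<t<1/2$, and uses the algebraic identity $\frac{a^{1+t}-b^{1+t}}{a-b}=a^t+b^t-\frac{a^{1-t}-b^{1-t}}{a-b}\,a^tb^t$: the terms $a^t$, $b^t$ are one-sided multiplications handled by H\"older, and the correction term is the map $T^d_{t,1-t}$ of Lemma \ref{majlem} (applied with $d=\diag(x,y)$ and an off-diagonal $\delta$). The $L_p\to L_q$ bound for $T^d_{\beta,\gamma}$ when $q<1$ is obtained not by averaging, but by composing with a map $T^{d^\gamma}_{(1-v)/2,v}$ whose normalization is unital completely positive \emph{and trace preserving}, and invoking the Hansen--Pedersen inequality $S(y^q)\le S(y)^q$ on positive elements; taking traces turns this into the needed $L_q$ estimate in the direction that \emph{is} valid for $q<1$, and reduces everything to the case $\gamma=1/2$, which is Theorem \ref{main} after the change of variable $\delta=\sqrt d\,x+x\sqrt d$. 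Finally the restriction $t<1/2$ is removed by iterating via $M_{q,r}M_{p,q}=M_{p,r}$, and the passage from positive/self-adjoint to general $x,y$ is done through the commutator/anticommutator inequalities of \cite{R,R2} rather than your (correct, but then insufficient) anti-diagonal reduction. In short: where you try to act on $L_q$ by a completely positive average --- the forbidden move --- the paper uses complete positivity only through a trace-Jensen inequality on positive elements, which survives below exponent $1$.
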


From the arguments of the papers \cite{R, R2}, one needs to prove that
\begin{equation}\label{eq1}
  \forall x\in \M, \;\forall d\in L_p^+, \qquad \| xd^{p/q}\pm d^{p/q}x\|_q\leq C_{p,q}
  \| xd\pm d x\|_p \|d\|_p^{p/q-1}.
  \end{equation}

The inequality \eqref{eq1} with the plus signs is an easy consequence
of Theorem \ref{main}. The inequality  with the minus sign is also equivalent to
(see \cite[Lemma 2.4]{R})
\begin{equation}\label{eq2}\forall x,y\in L_p^+,  \qquad \| x^{p/q}-y^{p/q}\|_q \leq C'_{p,q}
  \max\{\|x\|_p,\|y\|_p\}^{p/q-1} . \| x-y\|_p.
 \end{equation}  

To prove \eqref{eq2}, one can assume  again that 
both $x$ and $y$ sit in a finite von Neumann algebra and are
invertible with finite spectra. We will need an intermediate result.

We assume that $(\M,\tau)$ is finite and let $d=\sum_{i=1}^n d_i p_i$ with $d_i>0$ and where $p_i$ are orthogonal projections summing up to 1. We define a map $T_{\beta,\gamma}^d$ on $\M$ for $0<  \gamma<1$ and $\beta\geq 0$ by
$$T^d_{\beta,\gamma}(\delta)= \sum_{i,j=1 }^n \frac
{d_i^{\gamma}-d_j^{\gamma}} {d_i-d_j} d_i^\beta d_j^{\beta} p_i\delta
p_j,$$ when $i=j$, $\frac {d_i^{\gamma}-d_j^{\gamma}} {d_i-d_j}$ means
$\gamma d_i^{\gamma-1}$. Note that $T^d_{\beta,\gamma}$ is completely
positive when $0<\gamma<1$ as the matrix $\Big(\frac {d_i^{\gamma}-d_j^{\gamma}} {d_i-d_j}\Big)_{i,j=1}^n$ is positive when $0<\gamma\leq 1$.

We write $\alpha=2\beta+\gamma-1$, and as in Theorem main with $0<s<\infty$ and
$0<r\leq \infty$, we set $p$ so that $\frac 1 p = \frac 1s+\frac 1r$ and $q$ so that $\frac 1 q = \frac {1+\alpha}s+\frac 1r$.

\begin{lemma}\label{majlem} With the above notation, for  $1/2\leq  \gamma<1$ and $\beta>0$ so that $\alpha>0$
and $0<q\leq 1$, there is a constant $C_{\beta,\gamma}$ so that for any 
  $\delta$ in $\M$
$$\|T^d_{\beta,_\gamma}(\delta)\|_{q}\leq C_{\beta,\gamma,q} \| \delta\|_p\|d\|_s^\alpha.$$
\end{lemma}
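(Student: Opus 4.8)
The plan is to factor $T^d_{\beta,\gamma}$ through the multiplier already controlled by Theorem \ref{main}. Write $a=d_i$, $b=d_j$, $t=a/b$, and recall $1+\alpha=2\beta+\gamma$, so that the symbol of $T^d_{\beta,\gamma}$ is
$$S_{ij}=\frac{d_i^\gamma-d_j^\gamma}{d_i-d_j}\,d_i^\beta d_j^\beta=b^\alpha\, t^\beta\frac{t^\gamma-1}{t-1}.$$
Setting $y=dx+xd$, Theorem \ref{main} reads $\|xd^{1+\alpha}\|_q\le C_{\alpha,q}\|d\|_s^\alpha\|y\|_p$, and since $p_i x p_j=(d_i+d_j)^{-1}p_i y p_j$ this says exactly that the Schur multiplier with symbol $d_j^{1+\alpha}/(d_i+d_j)$ is bounded from $L_p$ to $L_q$ with norm $\le C_{\alpha,q}\|d\|_s^\alpha$. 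Applying the same to $x^*$ (which handles $\|d^{1+\alpha}x\|_q$) and combining with the $q$-triangle inequality, the symmetrized multiplier $S_N$ with symbol
$$N_{ij}=\frac{d_i^{1+\alpha}+d_j^{1+\alpha}}{d_i+d_j}=b^\alpha\frac{t^{1+\alpha}+1}{t+1}$$
is bounded $L_p\to L_q$ with $\|S_N\|\le C_{\alpha,q}\|d\|_s^\alpha$. Since Schur multipliers compose by multiplying symbols, $T^d_{\beta,\gamma}=S_\Psi\circ S_N$ with $\Psi_{ij}=S_{ij}/N_{ij}$, and the lemma reduces to bounding $S_\Psi$ on $L_q$ by a constant depending only on $\beta,\gamma,q$.

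Dividing by the symmetric $N$ (rather than a one-sided symbol, which would blow up as $t\to\infty$) is what makes the quotient homogeneous of degree $0$, so that it carries no dependence on $d$ or $n$: a direct computation gives $\Psi_{ij}=\Theta(d_i/d_j)$ with
$$\Theta(t)=\frac{t^\beta(t^\gamma-1)(t+1)}{(t-1)(t^{1+\alpha}+1)}.$$
The apparent singularity at $t=1$ is removable ($\Theta(1)=\gamma$), while $\Theta(t)\sim t^\beta$ as $t\to0$ and $\Theta(t)\sim t^{-\beta}$ as $t\to\infty$. Setting $t=e^u$ and $u_i=\log d_i$, the function $\td\Theta(u)=\Theta(e^u)$ is smooth and decays like $e^{-\beta|u|}$; since the nearest singularities of $\Theta$ solve $t^{1+\alpha}=-1$, it extends holomorphically to the strip $|\operatorname{Im}u|<\pi/(1+\alpha)$, so $\wh{\td\Theta}\in L^1(\bR)$ with rapid decay. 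As the kernel $\int_\bR\wh{\td\Theta}(\xi)e^{i\xi(u_i-u_j)}\,d\xi=\td\Theta(u_i-u_j)$ is exactly $\Psi_{ij}$, functional calculus gives
$$S_\Psi(\delta)=\int_\bR\wh{\td\Theta}(\xi)\,d^{i\xi}\delta\,d^{-i\xi}\,d\xi,$$
where each $d^{i\xi}$ is unitary.

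For $q=1$ the bound is immediate from unitary invariance, $\|d^{i\xi}\delta d^{-i\xi}\|_1=\|\delta\|_1$, and the triangle inequality, giving $\|S_\Psi(\delta)\|_1\le\|\wh{\td\Theta}\|_{L^1}\|\delta\|_1$. The main obstacle is the genuinely quasi-Banach range $0<q<1$, where the triangle inequality for the integral fails; this is the difficulty emphasized in the introduction, and it is the only hard point. Here I would decompose $\td\Theta=\sum_{k\ge0}\td\Theta_k$ in Littlewood--Paley fashion, with $\wh{\td\Theta_k}$ supported in $|\xi|\sim2^k$, and use a frequency-localized estimate on $S_q$: a band of width $R$ contributes only $\sim R$ effectively independent unitary rotations $d^{i\xi}$, so by the $q$-triangle inequality and the elementary bound $\|c\|_{\ell^q}\le R^{1/q-1}\|c\|_{\ell^1}$ one expects $\|S_{\td\Theta_k}\|_{S_q\to S_q}\lesssim_q 2^{k(1/q-1)}\|\wh{\td\Theta_k}\|_{L^1}$, in the spirit of the decomposition technique of \cite{R2}. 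Summing with $\|S_\Psi(\delta)\|_q^q\le\sum_k\|S_{\td\Theta_k}(\delta)\|_q^q$, the rapid decay of $\wh{\td\Theta}$ dominates the growth $2^{k(1-q)}$, so the series converges to a finite $C_{\beta,\gamma,q}$.

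Combining the two steps, $\|T^d_{\beta,\gamma}(\delta)\|_q=\|S_\Psi(S_N(\delta))\|_q\le C_{\beta,\gamma,q}\,\|S_N(\delta)\|_q\le C_{\beta,\gamma,q}\,\|d\|_s^\alpha\,\|\delta\|_p$, which is the claimed inequality (recall $\alpha$ is a function of $\beta,\gamma$). The entire difficulty is concentrated in the $S_q$-boundedness of $S_\Psi$ for $q<1$; everything else is the elementary algebra of symbols together with Theorem \ref{main}.
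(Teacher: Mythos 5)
Your algebraic reductions are correct: Theorem \ref{main} does say that the Schur multiplier with symbol $d_j^{1+\alpha}/(d_i+d_j)$ (hence, after symmetrizing, $N_{ij}$) is bounded from $L_p$ to $L_q$ with norm $\lesssim\|d\|_s^{\alpha}$, and symbols multiply under composition. The fatal step is the one you yourself flag as the ``only hard point'': the claim that $S_\Psi$, with the degree-zero homogeneous symbol $\Psi_{ij}=\td\Theta(\log d_i-\log d_j)$, is bounded on $L_q$ for $q<1$ with a constant depending only on $\beta,\gamma,q$. This is not merely left heuristic --- it is false. If such a bound held uniformly in $n$ and in the $d_i$, then taking the points $\log d_i$ in arithmetic progressions would make the Toeplitz matrices $\big(\td\Theta(\ep(i-j))\big)_{i,j}$ Schur multipliers of $S_q$ uniformly; by the Aleksandrov--Peller characterization of Toeplitz--Schur multipliers on $S_q$ for $q<1$ (``Hankel and Toeplitz--Schur multipliers'', Math.\ Ann.\ 2002, not the paper cited as \cite{AP}), the symbol would have to be the Fourier coefficients of a \emph{discrete} measure with $\ell^q$ atoms, and by Wiener's theorem such coefficients cannot tend to zero unless the measure is zero. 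Since $\td\Theta$ is continuous with $\td\Theta(0)=\gamma\neq0$ and $\td\Theta(u)\to0$ as $|u|\to\infty$, no such representation exists. This is precisely the phenomenon the introduction warns about: even unital completely positive Schur multipliers fail on $S_q$ for $q<1$. The same obstruction kills each Littlewood--Paley piece $S_{\td\Theta_k}$ (its symbol is again absolutely continuous rather than discrete), so the frequency-localized bound $\|S_{\td\Theta_k}\|_{S_q\to S_q}\lesssim 2^{k(1/q-1)}\|\wh{\td\Theta_k}\|_{L^1}$ already fails; the ``$R$ effectively independent rotations'' heuristic cannot be repaired, because replacing the integral by a finite sum of unitary conjugations creates an error that $S_q$, lacking a triangle inequality, gives no way to absorb. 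Your argument does prove the case $q=1$, but that case is not the content of the lemma.

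The lemma is true only because $T^d_{\beta,\gamma}$ is estimated as a single map $L_p\to L_q$, with the gain of integrability $1/q=1/p+\alpha/s$ paid for by the weight $\|d\|_s^\alpha$; it does not factor through any map bounded on $L_q$. The paper's proof is organized to avoid $L_q\to L_q$ multipliers entirely. For $\gamma=1/2$ the symbol is $d_i^\beta d_j^\beta/(\sqrt{d_i}+\sqrt{d_j})$, so the change of variable $\delta=\sqrt d\,x+x\sqrt d$ turns the claim into
$$\|d^\beta xd^\beta\|_q\leq C\,\|\sqrt d\,x+x\sqrt d\|_p\,\|d\|_s^\alpha,$$
which follows from $\|d^\beta xd^\beta\|_q\leq\max\{\|d^{2\beta}x\|_q,\|xd^{2\beta}\|_q\}$ (H\"older or plurisubharmonicity) and Theorem \ref{main} applied to $x$ and $x^*$ with $\sqrt d$, $2\alpha$, $2s$ in place of $d$, $\alpha$, $s$. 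For $1/2<\gamma<1$ one reduces to $\delta\geq0$, writes $T^{d^\gamma}_{(1-v)/2,v}T^d_{\beta,\gamma}=T^d_{\beta+(\gamma-1)/2,1/2}$ with $v=1/(2\gamma)$, and uses that $T^{d^\gamma}_{(1-v)/2,v}$ is completely positive and trace preserving together with the Hansen--Pedersen inequality $S(y^q)\leq S(y)^q$ to pass from $\gamma$ to $1/2$. There, positivity and complete positivity are the substitutes for the convexity missing at $q<1$ --- exactly the ingredient your factorization through $L_q$ cannot supply.
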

\begin{proof}
  We start with the case $\gamma=\frac 12$. In this case
  $\frac {d_i^{\gamma}-d_j^{\gamma}} {d_i-d_j}=\frac 1{\sqrt{d_i}+\sqrt{d_j}}$.
  Since we assume $d$ invertible, we can make a change of variable by letting
  $\delta=\sqrt d x+ x\sqrt d$. Then we want to prove that
  $$\|d^\beta xd^\beta\|_q=\|\sum_{i,j=1 }^n d_i^\beta d_j^\beta p_ixp_j\|_q\leq
  C_{\beta,\gamma,q} \|\sqrt d x +x \sqrt d\|_p. \|d\|_s^\alpha.$$ 

  Recall that $\|.\|_q$ is plurisubharmonic (see \cite{Kalton, Xust}) or
  simply by the H\"older inequality $\|d^\beta xd^\beta\|_q\leq
  \max\{\|d^{2\beta}x\|_q ,\|xd^{2\beta}\|_q\}$. Note that
  $2\beta>\frac 12$, we can write $4\beta=1+ 2 \alpha$. We can apply
  Theorem \ref{main} for $x$ and $x^*$ with $\sqrt d$ instead of $d$,
  $2\alpha$ instead of $\alpha$ and $2 s$ instead of $s$ to get
  $$\max\{ \| x{\sqrt d}^{1+2\alpha}\|_q,\| {\sqrt d}^{1+2\alpha}x\|_q\} \leq C_{2\alpha,q} \|\sqrt{d}\|_{2s}^{2\alpha} .\| \sqrt dx+x\sqrt d\|_p,$$
  as  $\frac 1 q=\frac 1 p +\frac {2\alpha}{2s}$. This is exactly the result.

  Let $\frac 12 <\gamma<1$. As we can decompose $\delta$ into the sum of 4 positive elements (with a control on the norm), it suffices to prove the inequality when $\delta\geq 0$.  Put $v=1/(2\gamma)<1$, we use the relation
  $$T^{d^\gamma}_{(1-v)/2,v}T^d_{\beta,\gamma}=T^d_{\beta+\gamma(1-v)/2,
    1/2}=T^d_{\beta+(\gamma -1)/2, 1/2}.$$ The map
  $T^{d^\gamma}_{(1-v)/2,v}$ is completely positive with
  $T^{d^\gamma}_{(1-v)/2,v}(1)=v$.  The Hansen-Pedersen inequality ensures that
  $S(y^q)\leq S(y)^q$ for $y\geq0$, $0<q<1$, and
  $S$ unital completely positive.  We apply it for $y=v
  T^d_{\beta,\gamma}(\delta)$ which is positive and $S=\frac 1v
  T^{d^\gamma}_{(1-v)/2,v}$ which is also trace preserving to get
  $$ \|T^d_{\beta,\gamma}(\delta)\|_q \leq v^{1-q} \| T^d_{\beta+(\gamma-1)/2, 1/2}(\delta)\|_q.$$ 
  We apply the result for $\gamma=1/2$, because $\alpha=2\beta+\gamma-1>0$:
  $$\|T^d_{\beta,\gamma}(\delta)\|_q \leq C_{\beta, \gamma,q} \|\delta\|_p \|d\|_s^\alpha.$$
    
\end{proof}

\begin{rk}{\rm One can get the lemma for $0<\gamma<1$ using compositions but we won't need it.}\end{rk}

\begin{rk}{\rm For $1\leq \gamma\leq 2$, the function $x\mapsto x^\gamma$ is operator convex on $L_p^+$. If $x=y+\delta$ with $\delta\geq0$ assuming $\|\delta\|_p\leq 1=\|x\|_p$, by convexity
    $$ x^\gamma-y^\gamma\leq (1-\|\delta\|_p) y^\gamma + \|\delta\|_p
    (y +\frac 1 {\|\delta\|_p} \delta)^\gamma -y^\gamma$$ Thus, there
    is an easy lipschitz norm control in $L_p$ for the positive part of $x^\gamma-y^\gamma$.
    To get another estimate for the negative part, one can use another operator
    inequality to reduce it to the derivative of the $\gamma$-power function at $y$ on $\delta$. This is the is the content of Lemma \ref{majlem} which can be seen as a control of the defect of positivity of  $x^\gamma-y^\gamma$. 

}\end{rk}
    
\begin{proof}[End of proof of Theorem \ref{maz}]
  We want to prove \eqref{eq2} when $(\M,\tau)$ is finite
  $x=\sum_{i=1}^n x_i p_i$ is positive invertible with a finite spectrum
  and similarly $y=\sum_{j=1}^n  y_j q_j$. 

  We assume for the moment that $p< 3q/2$ so that $p/q=1+t$ with
  $0<t<1/2$. We have
\begin{eqnarray*} x^{p/q}-y^{p/q}&= &\sum_{i=1}^n\sum_{j=1}^m
  \frac {x_i^{1+t}-y_j^{1+t}}{x_i-y_j} p_i(x-y) q_j\\
  &=& x^t (x-y) + (x-y) y^t - \sum_{i=1}^n\sum_{j=1}^m
  \frac {x_i^{1-t}-y_j^{1-t}}{x_i-y_j} x_i^ty_j^t p_i(x-y) q_j
  \end{eqnarray*}
Let $z$ denote the last term of the sum. We use the standard
$2\times2$ trick considering $M_2(\M)$ with trace ${\rm Tr}\otimes
\tau$. We let $d=\left[\begin{array}{cc} x & 0 \\0&
    y\end{array}\right]$ and $\delta= \left[\begin{array}{cc} 0 & x-y
    \\0& 0\end{array}\right]$, by Lemma \ref{majlem} with $\gamma
=1-t$, $\beta=t$ ($\alpha=t>0$) and $s=p$, we get
$$\|z\|_q =\| T^d_{t,1-t} (\delta)\|_q\leq C_{t,q} \|x-y\|_p . (\|x\|_p^p+\|y\|_p^p)^{t/p}.$$

To get the general case, it suffices to notice that $M_{q,r}M_{p,q}= M_{p,r}$
so that one can iterate the result when $0<t<1/2$ to get that for all $t>0$.
\end{proof}

Compared with Lemma \ref{majlem}, it is even more direct to show, with the
notation of the proof of Theorem \ref{maz}, that the maps
$\delta \mapsto \sum_{i=1}^n\sum_{j=1}^m
\frac {x_i^{p/q}-y_j^{p/q}}{x_i+y_j} p_i\delta q_j$ is bounded from
$L_p$ to $L_q$ when $p>q$. By the same arguments as in \cite{R2}, one gets
\begin{cor}
Let $(\M,\tau)$ be a semifinite von Neumann algebra, then for
$q<p<\infty$ :
$$\forall x,y\in L_p,\qquad \| |x|^{p/q}-|y|^{p/q}\|_q\leq C_{p,q} \max\{\|x\|_p,\|y\|_p\}^{p/q-1} . \| x-y\|_p.$$
\end{cor}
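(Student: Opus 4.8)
The plan is to reduce the general statement to the positive case already settled in Theorem \ref{maz} together with the boundedness of the auxiliary ``$+$-kernel'' map announced just above the corollary. First I would run the usual approximation argument to assume that $(\M,\tau)$ is finite and that $x,y$ are invertible with finite spectra. Since $x\mapsto|x|$ is \emph{not} Lipschitz on $L_p$, one cannot simply insert $|x|,|y|$ into \eqref{eq2}; instead I would pass to the selfadjoint dilations
$$X=\begin{bmatrix} 0 & x \\ x^* & 0\end{bmatrix},\qquad Y=\begin{bmatrix} 0 & y \\ y^* & 0\end{bmatrix}$$
in $M_2(\M)$ with trace $\Tr\otimes\tau$. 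Then $|X|^{p/q}=\diag(|x^*|^{p/q},|x|^{p/q})$, the off-diagonal corners of $X-Y$ are $x-y$, and $\|X\|_p\sim\|x\|_p$, so that an inequality $\||X|^{p/q}-|Y|^{p/q}\|_q\le C_{p,q}\max\{\|X\|_p,\|Y\|_p\}^{p/q-1}\|X-Y\|_p$ yields the corollary for general $x,y$. This reduces everything to the \emph{selfadjoint} version of the statement.

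For selfadjoint $X=\sum_i\lambda_i p_i$ and $Y=\sum_j\mu_j q_j$ with finite spectra (and $0$ outside the spectrum by invertibility), I would use the divided-difference identity for $f(t)=|t|^{p/q}$,
$$|X|^{p/q}-|Y|^{p/q}=\sum_{i,j}\frac{|\lambda_i|^{p/q}-|\mu_j|^{p/q}}{\lambda_i-\mu_j}\,p_i(X-Y)q_j,$$
and split the double sum according to the signs of $\lambda_i,\mu_j$. On the blocks where $\lambda_i$ and $\mu_j$ have the \emph{same} sign the kernel is exactly $\frac{a^{p/q}-b^{p/q}}{a-b}$ with $a=|\lambda_i|$, $b=|\mu_j|$; restricting to the corresponding spectral corners $p_\pm(X-Y)q_\pm$, this is literally an instance of the divided-difference map bounded in the proof of Theorem \ref{maz}, i.e. of \eqref{eq2} applied to the positive parts. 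On the blocks where $\lambda_i$ and $\mu_j$ have \emph{opposite} signs one has $\lambda_i-\mu_j=|\lambda_i|+|\mu_j|$, so the kernel becomes $\frac{a^{p/q}-b^{p/q}}{a+b}$, which is precisely the map $\delta\mapsto\sum_{i,j}\frac{a_i^{p/q}-b_j^{p/q}}{a_i+b_j}\,p_i\delta q_j$ mentioned above.

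It then remains to bound this ``$+$-kernel'' map from $L_p$ to $L_q$. Writing $p/q=1+t$ with $t>0$ and splitting $\frac{a^{1+t}-b^{1+t}}{a+b}=a^t\frac{a}{a+b}-b^t\frac{b}{a+b}$, I would extract the factors $a^t,b^t$ by H\"older (using $\tfrac1q=\tfrac1p+\tfrac tp$ and $\|a^t\|_{p/t}=\|a\|_p^t$), leaving the map $\delta\mapsto\sum_{i,j}\frac{a_i}{a_i+b_j}\,p_i\delta q_j$. The crucial difference with the ``$-$-kernel'' is that here $\frac1{a_i+b_j}$ is completely positive, being the resolvent-type map $\delta\mapsto\int_0^\infty e^{-sa}\delta e^{-sb}\,ds$; so the needed $L_p$-bound can be obtained by a Hansen--Pedersen reduction in the spirit of Lemma \ref{majlem} rather than by a genuine Schur-multiplier estimate. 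Assembling the two sign-blocks by the $q$-triangle inequality then gives the selfadjoint inequality, and the dilation step finishes the proof, exactly along the lines of \cite{R2}.

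The main obstacle I expect is precisely this last step in the range $q<1$: Schur multipliers are not bounded on $L_p$ below exponent $1$, so the boundedness of $\frac{a_i}{a_i+b_j}$ cannot be taken for granted and must be extracted from complete positivity (through the operator-convexity and Hansen--Pedersen mechanism already used in Lemma \ref{majlem}) combined with H\"older, rather than from a convexity-based multiplier bound. Everything else is bookkeeping of the four sign-combinations and tracking the constants through the $2\times2$ dilation.
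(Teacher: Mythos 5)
Your overall architecture is the right one, and it matches what the paper means by ``the same arguments as in \cite{R2}'': selfadjoint $2\times2$ dilation, the divided-difference identity for $f(t)=|t|^{p/q}$, splitting according to the signs of the spectra, same-sign blocks handled by the divided-difference map bounded in the proof of Theorem \ref{maz} (note you need the boundedness of that map on arbitrary $\delta$, not the inequality \eqref{eq2} itself, but the proof of Theorem \ref{maz} does give exactly that), and opposite-sign blocks handled by the ``$+$''-kernel map stated before the corollary. The genuine gap is in the one step that is actually new, the $L_p\to L_q$ bound for that ``$+$''-kernel. You split $\frac{a^{1+t}-b^{1+t}}{a+b}=a^t\frac{a}{a+b}-b^t\frac{b}{a+b}$ and pull out $a^t,b^t$ by H\"older; since $\frac1q=\frac{1+t}{p}$, the exponent count forces the residual map $\delta\mapsto\sum_{i,j}\frac{x_i}{x_i+y_j}\,p_i\delta q_j$ to be bounded from $L_p$ to $L_p$. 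For $p\le1$ this is false, uniformly in the weights: taking $x_i=r^i$, $y_j=r^{j+1/2}$ and letting $r\to\infty$, the kernel converges entrywise to the triangular kernel $1_{i>j}$, and triangular truncation is unbounded on $S_p$ for $p\le 1$ (even extracting the main diagonal is unbounded, as $J_n/n\mapsto I_n/n$ shows, with $\|I_n/n\|_p=n^{1/p-1}$). Your proposed rescue via complete positivity and Hansen--Pedersen cannot repair this: complete positivity does not imply $L_p$-boundedness below exponent $1$ (the paper stresses this in the introduction), the mechanism of Lemma \ref{majlem} only transfers a known bound for $S\circ T$ to a bound for the completely positive map $T$ --- and here the target statement is false, so no admissible $S$ can exist; moreover the full kernel $\frac{a^{1+t}-b^{1+t}}{a+b}$ is antisymmetric in $(a,b)$, hence never positive semidefinite, so the Hansen--Pedersen route does not even apply to it directly.

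The repair --- and what makes the paper's claim ``even more direct'' than Lemma \ref{majlem} --- is to keep the weight inside the kernel: write $\frac{a^{1+t}-b^{1+t}}{a+b}=\frac{a^{1+t}}{a+b}-\frac{b^{1+t}}{a+b}$ and recognize each piece as Theorem \ref{main} in kernel form. With $d=\diag(x,y)$ in $M_2(\M)$ and $\delta$ sitting in the $(1,2)$-corner, the change of variables $\delta=dX+Xd$, i.e.
$$X=\sum_{i,j}\frac{1}{x_i+y_j}\,p_i\delta q_j,$$
turns $\sum_{i,j}\frac{y_j^{1+t}}{x_i+y_j}p_i\delta q_j$ into the corner of $Xd^{1+t}$ and $\sum_{i,j}\frac{x_i^{1+t}}{x_i+y_j}p_i\delta q_j$ into the corner of $d^{1+t}X$, so Theorem \ref{main} with $\alpha=t=p/q-1$, $s=p$, $r=\infty$ (applied to $X$ and $X^*$) gives $\|\cdot\|_q\le C\,\|d\|_p^{\,t}\,\|\delta\|_p$, which is precisely the bound you need; the weight is never separated from the truncation. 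With that step replaced, the rest of your proof --- the corner estimates, the $q$-triangle assembly over the four sign blocks, and the dilation bookkeeping --- goes through.
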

\subsection{Extension to type III}

Theorems \ref{main} and \ref{maz} and their corollaries extend in the exact same form to
type III von Neumann algebras using Haagerup's definition. This
follows from the Haagerup reduction principle \cite{HJX} (or
\cite{CPPR} for weights).
The arguments are given in the last section of \cite{R}
with the only difference that one has to avoid the use of conditional
expectations but rather use simple approximations that are provided by
the reduction principle ($\cup_n L_p(\R_n)$ is dense in $L_p(\R)$).  The key point is that this reduction
principle is compatible with the functional calculus and we know from
\cite{Ray} that powers and Mazur maps are continuous.  We leave the
details to the interested reader.

\textbf{Acknowledgement.}  The author is supported by
ANR-19-CE40-0002.

\bibliographystyle{plain}

\end{document}